\documentclass[11pt, a4paper, fleqn]{article}
\usepackage[T1]{fontenc}
\usepackage[utf8]{inputenc}
\usepackage{geometry}
\usepackage{amsmath,amssymb,amsthm}
\usepackage{enumitem}
\usepackage{graphicx}
\usepackage{subcaption}
\usepackage[unicode=true,
 bookmarks=false,
 breaklinks=false,hidelinks]
 {hyperref}

\numberwithin{equation}{section}
\numberwithin{figure}{section}

\theoremstyle{plain}
\newtheorem{theorem}{Theorem}[section]
\newtheorem{lemma}[theorem]{Lemma}
\newtheorem{corollary}[theorem]{Corollary}

\theoremstyle{definition}
\newtheorem{definition}[theorem]{Definition}
\newtheorem{remark}[theorem]{Remark}

\newcommand{\dotcup}{\mathrel{\dot{\cup}}}
\newcommand{\dotsub}{\mathrel{\dot{\setminus}}}

\newcommand{\Par}{\operatorname{par}}
\newcommand{\cld}{\operatorname{cld}}
\newcommand{\lev}{\operatorname{level}}
\newcommand{\PTG}{\operatorname{PTG}}
\newcommand{\evn}{\operatorname{even}}
\newcommand{\odt}{\operatorname{odd}}
\newcommand{\net}{\operatorname{net}}
\newcommand{\CC}{\mathbb{C}}
\newcommand{\FF}{\mathbb{F}}

\title{Refutation of the Non-Cancelling Intersections Conjecture}
\author{Hermann Wilhelm \\ Technische Universität Ilmenau}
\date{\today}

\begin{document}

\maketitle

\begin{abstract}
  The Non-Cancelling Intersections (NCI) conjecture of Amarilli, Monet and Suciu~\cite{amarilli2024non} states that the union of a finite family of sets can always be built from its algebraically non-cancelling intersections using only disjoint unions and subset complements. In~\cite{wilhelm2026left} the conjecture was shown to fail when the witnessing dot-algebra expression is required to be left-linear. Here we remove that restriction and show that the conjecture is false in general: there is a finite lattice for which the set associated with its top element admits no dot-algebra representation whatsoever. The counterexample is a lattice $P_{p,\mathfrak m}$ as in~\cite{wilhelm2026left}, and the argument differs in only two ways. First, we replace the sequential ``toggle game'' of~\cite{wilhelm2026left} by a corresponding tree-shaped object, the \emph{plane da-tree}, which stands to dot-algebra trees as the toggle game stands to left-linear ones. Second, we use a marked plane in which there is no admissible set of \emph{any} size between $2p$ and $4p$, which also removes the need for the Erd\H{o}s--Beck theorem and for the arithmetic Nullstellensatz. Consequently $p$ need not be astronomically large: every prime $p\ge 10^{5}$ works.
\end{abstract}

\section{Introduction}\label{sec:introduction}

This note explains how to generalize the refutation of the Non-Cancelling Intersections (NCI)
conjecture for left-linear trees to the general case. The setting and all notation are those
of~\cite{wilhelm2026left}, recalled in Section~\ref{sec:preliminaries}. Most of the proof carries over
from toggle sequences to da-trees easily. Only two arguments have to change.

\begin{enumerate}[leftmargin=2em]
  \item \textbf{The two continuity arguments.} A winning plane toggle sequence changes the number of
  red points by exactly one per move, so it visits a state of every cardinality (Step~1 of
  \cite[Proposition~7.3]{wilhelm2026left}); and the number $f(R)=|R|-\max_\ell |R\cap\ell|$ of red
  points off the main line likewise changes by at most one per move, which is how the degenerate case
  of an almost collinear state is handled (Step~3 there). Both are properties of a \emph{sequence} and
  are simply false for trees, whose node states can jump.

  The substitute is Lemma~\ref{lem:medium} below: in a tree whose leaves carry at most one point each
  and whose root carries all $p^{2}$ points, for every threshold $N\le p^{2}$ there is a node carrying
  between $N$ and $2N-2$ points. This is weaker than visiting every cardinality, and it is the reason
  why the counting of Section~\ref{sec:counting} has to exclude a whole \emph{range} of cardinalities
  rather than a single one.

  \item \textbf{The blocking-gadget lemma.} \cite[Lemma~4.2]{wilhelm2026left} --- two red level-$1$
  elements of a blocking gadget force a red unblocked one --- is proved by counting, for each vertex
  of the gadget, how often it has been picked so far. In a da-tree, ``so far'' can be replaced by ``in the corresponding subtree'', but a leaf may also be canceled by a later subtraction, so simply counting is not enough. The substitute is Lemma~\ref{lem:blockingGadgetTree}, which counts the leaves of a subtree with a sign attached: the sign records whether the leaf is inherited through an even or an odd number of right-hand sides of $-$-nodes. Lemma~\ref{lem:netPick} shows that this signed count decides membership in the state of a node, and after that the argument of~\cite{wilhelm2026left} goes through as before, with the signed counts in place of the pick counts.
\end{enumerate}

To resolve the first point, we work with point sets of size $\Theta(p)$, instead of point sets of constant size $n$ (as in \cite{wilhelm2026left}). This is also what makes the rest of the proof
\emph{simpler} rather than harder. Above $p$ the degenerate case of Step~3 disappears by
itself: a set $T$ of more than $p$ points of $\FF_p^{2}$ cannot be concentrated on a line, since a
line has only $p$ points, so at least $|T|-p$ points lie off any given line. Moreover, in the range
$2p \le |T| \le 4p$ the incidence input needed to run the probabilistic argument is completely
elementary. Two exact identities --- every point lies on exactly $p+1$ lines, every pair of points lies on
exactly one line --- together with Cauchy--Schwarz show that every such $T$ determines at least
$(p+1)^{2}/12$ traces of size at most $47$ (Lemma~\ref{lem:shortTracesCS}). Since each of these traces
has to contain one of the $w=\lceil\sqrt{2p}\rceil+1$ points marked for its line, a first-moment
computation produces a marking for which no set of size between $2p$ and $4p$ is admissible
(Lemma~\ref{lem:markingExists}). Neither the Erd\H{o}s--Beck theorem nor the transfer from $\CC$ to
$\FF_p$ is used, and consequently the astronomical bound of~\cite[\S8]{wilhelm2026left} disappears:
every prime $p\ge 10^{5}$ works.

Combining the two halves gives the main result.

\begin{theorem}\label{thm:mainIntro}
For every prime $p\ge 10^{5}$ there is a marking $\mathfrak m$ such that the lattice $P_{p,\mathfrak m}$
admits no winning dot-algebra tree. Consequently the NCI conjecture is false.
\end{theorem}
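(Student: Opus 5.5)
The plan is to combine the two halves announced in the introduction: a reduction from winning dot-algebra trees on $P_{p,\mathfrak m}$ to winning plane da-trees, and a combinatorial argument that for a suitable marking no such plane da-tree exists. The second half rests on the absence of admissible point sets in a whole range of cardinalities.

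\textbf{Step 1 (reduction).} Starting from a winning dot-algebra tree for the top element of $P_{p,\mathfrak m}$, I pass to the plane da-tree obtained by projecting each node's state onto the set of red points of the marked plane. This mirrors how \cite{wilhelm2026left} passes from left-linear expressions to plane toggle sequences.

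\textbf{Step 2 (node states are admissible).} I show that every node state of the plane da-tree is an admissible set. In \cite{wilhelm2026left} this rested on the blocking-gadget lemma \cite[Lemma~4.2]{wilhelm2026left}. Here I replace it by Lemma~\ref{lem:blockingGadgetTree}, using signed leaf counts: Lemma~\ref{lem:netPick} shows that the parity of the signed count of a gadget vertex in a subtree determines whether that vertex lies in the node's state. With this in hand, the case analysis of \cite{wilhelm2026left} goes through verbatim with net picks in place of pick counts. The conclusion is that two red level-$1$ elements force a red unblocked one, so every node's red set satisfies the admissibility condition relative to $\mathfrak m$.

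\textbf{Step 3 (leaves and root).} Leaves carry at most one point each, and the root of a winning tree carries all $p^2$ points.

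\textbf{Step 4 (a node of medium size).} I apply Lemma~\ref{lem:medium} with $N=2p+1$, which is admissible since $2p+1\le p^2$. It yields a node whose red set $T$ satisfies
\[
2p+1\le |T|\le 4p .
\]
By Step~2, $T$ is admissible.

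\textbf{Step 5 (choice of marking).} It remains to choose $\mathfrak m$ so that no admissible set has size in $[2p,4p]$.
\begin{itemize}
\item Lemma~\ref{lem:shortTracesCS} shows that every such $T$ has at least $(p+1)^2/12$ lines whose trace has size between $2$ and $47$.
\item Admissibility forces each such trace to contain one of the $w=\lceil\sqrt{2p}\rceil+1$ marked points on its line.
\item If the markings are chosen uniformly and independently per line, each short trace meets the marked set with probability at most $47w/(p+1)$.
\end{itemize}
The events are independent across lines. So the probability that a fixed $T$ is admissible is at most
\[
\left(\frac{47w}{p+1}\right)^{(p+1)^2/12}=\exp\!\left(-\Omega\!\left(p^2\log p\right)\right).
\]
The number of candidate sets $T$ is at most $\binom{p^2}{\le 4p}\le \exp\!\left(O(p\log p)\right)$. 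The union bound therefore gives expectation less than $1$, and Lemma~\ref{lem:markingExists} packages exactly this. This contradicts Step~4, so no winning dot-algebra tree exists.

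\textbf{Step 6 (NCI).} The statement about NCI follows as in \cite{wilhelm2026left}. The set attached to the top of $P_{p,\mathfrak m}$ is a union of a finite family whose non-cancelling intersections correspond to lattice elements. A dot-algebra representation of it would therefore yield a winning dot-algebra tree, which Steps~1–5 rule out.

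\textbf{Main obstacle.} I expect the main difficulty to be Step~2: the argument that net signs, rather than raw pick counts, control membership through subtraction nodes. The constant check $p\ge10^5$ inside Lemma~\ref{lem:markingExists} is also delicate, but it is routine once the probabilistic bound is set up.
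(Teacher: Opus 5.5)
Your proposal is correct and follows the paper's proof step for step: the translation to a plane da-tree (Lemma~\ref{lem:daToPlane}), with admissibility of every node coming from the signed-count blocking-gadget lemma; the halving descent of Lemma~\ref{lem:medium} to a node of size in $[2p,4p]$; and the first-moment marking of Lemma~\ref{lem:markingExists}. There are two small slips, neither of which breaks the argument. First, Lemma~\ref{lem:netPick} gives the exact value $\net(a,u)=[\,a\in X_u\,]$, not merely a parity statement, and the gadget argument needs that exactness: it rests on the integer identity $[\,x\in X_u\,]=|X_u\cap L|+|X_u\cap C|$, which read mod~$2$ would bound nothing. Second, a line of $\FF_p^2$ has $p$ points, so the union bound gives $47w/p$ rather than $47w/(p+1)$, which changes nothing asymptotically.
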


Theorem~\ref{thm:mainIntro} is proved as Theorem~\ref{thm:noWinningDaTree} and
Corollary~\ref{cor:nciFails} in Section~\ref{sec:main}. For intuition it may be helpful to understand
the left-linear case and toggle sequences first, since they are easier to picture and the proof
techniques are the same.

\subsection*{Organization}

Section~\ref{sec:preliminaries} recalls what is needed from~\cite{wilhelm2026left}.
Section~\ref{sec:daTrees} defines dot-algebra trees and states the NCI conjecture in the lattice
formulation. Section~\ref{sec:planeTrees} introduces winning plane da-trees and proves the medium-node
lemma. Section~\ref{sec:GadgetTree} proves the tree version of the blocking-gadget lemma, and
Section~\ref{sec:daToPlane} deduces that a winning dot-algebra tree for $P_{p,\mathfrak m}$ induces a
winning plane da-tree. Section~\ref{sec:counting} produces the marking, and Section~\ref{sec:main}
assembles the proof; Remark~\ref{rem:leftlinear} there records that the same counting also simplifies
the main result of~\cite{wilhelm2026left} considerably. Section~\ref{sec:open} lists two open
problems.

\subsection*{Note added}

Alexander Walz (private communication, 26 August 2026) has independently obtained a result
equivalent to Corollary~\ref{cor:nciFails}, by extending the left-linear-tree construction
of~\cite{wilhelm2026left} to unrestricted da-trees. Both proofs pass through the same step, the
extension of the blocking-gadget lemma of~\cite{wilhelm2026left} from toggle sequences to arbitrary
da-trees (Lemma~\ref{lem:blockingGadgetTree} here; Walz obtains it by expanding indicator functions
in the zeta basis of the lattice), which yields that the point set of every node of a da-tree is
admissible. Walz works with these point sets directly and does not introduce plane da-trees. The two
proofs diverge in the descent that follows, and hence in the way the resulting sets are excluded. The
argument given here descends to a node whose state has size comparable to~$p$
(Lemma~\ref{lem:medium}), and excludes admissible sets throughout that range by an elementary
Cauchy--Schwarz estimate on the marking. Walz descends instead to a node whose point set is contained
in two lines together with an exceptional set of size bounded by an absolute constant independent
of~$p$, and excludes such configurations by a probabilistic marking argument based on the
finite-field Erd\H{o}s--Beck estimate of \cite[Theorem~6.3]{wilhelm2026left}. This argument therefore
requires a tower-type prime, as in~\cite{wilhelm2026left}.

\section{Preliminaries}\label{sec:preliminaries}

We use the notation of~\cite{wilhelm2026left} throughout and recall here only what is needed to read
this note.

\paragraph{Posets and lattices.}
All posets are finite. For an element $v$ of a poset $P$ we write $\uparrow\! v$ for the set consisting
of $v$ and all its ancestors and $\downarrow\! v$ for the set consisting of $v$ and all its
descendants, $\Par(v)$ for the set of parents of $v$ and $\cld(v)$ for the set of its children; note
that $v\in\uparrow\! v$ and $v\in\downarrow\! v$. The \emph{level} $\lev(v)$ is the length of a longest
upward path starting at $v$, so $\lev(\top)=0$, elements of the same level are incomparable, and all
ancestors of a level-$k$ element have level less than $k$. In particular, if $\lev(v)=1$ then
$\uparrow\! v=\{v,\top\}$, and if $\lev(v)=2$ then $\uparrow\! v=\{v\}\cup\Par(v)\cup\{\top\}$. The
Möbius function of a lattice is given by $\mu(\top)=1$ and $\mu(v)=-\sum_{u>v}\mu(u)$; level-$1$
elements have $\mu=-1$, and a level-$2$ element with exactly two parents has $\mu=1$. For
$v\in P\setminus\{\top\}$ we write
\[
  S_v \;:=\; \downarrow\! v
\]
for the set associated with $v$ in the dot-algebra, and $S_\top:=\bigcup_{v\ne\top}S_v=P\setminus\{\top\}$.

\paragraph{Blocking gadgets.}
An element $x$ of a lattice $P$ \emph{induces a blocking gadget} if $\lev(x)=3$ and $\mu(x)=0$; the
gadget is the interval $Q:=[x,\top]=\uparrow\! x$, and $x$ \emph{blocks} the elements of
$\cld(\top)\cap\Par(x)$, that is, those level-$1$ ancestors of $x$ that are parents of $x$
\cite[Definition~4.1]{wilhelm2026left}. We always write $L$ for the set of level-$1$ elements of $Q$,
$C$ for the set of level-$2$ elements of $Q$ and $M\subseteq L$ for the set of level-$1$ elements not
blocked by $x$, so that $Q=\{\top\}\cup L\cup C\cup\{x\}$. Every $c\in C$ satisfies
$\Par(c)\subseteq M$: an element of $L\setminus M$ is a parent of $x$, so nothing lies strictly
between it and $x$.

\paragraph{Lines, traces and markings.}
For a prime $p$ we denote the $p^{2}+p$ lines of $\FF_p^{2}$ by $L_{i,j}$ as
in~\cite[\S2]{wilhelm2026left}. Every point lies on exactly $p+1$ lines and every pair of distinct
points lies on exactly one line. For $T\subseteq\FF_p^{2}$ and a line $\ell$ with $|T\cap\ell|\ge2$
the set $T\cap\ell$ is the \emph{trace} of $\ell$ on $T$. A \emph{marking} is a map $\mathfrak m$
assigning to every line $\ell$ a set $\mathfrak m(\ell)\subseteq\ell$ of points \emph{marked for
$\ell$}; a point may be marked for one line and unmarked for another. A set $T\subseteq\FF_p^{2}$ is
\emph{admissible} for $\mathfrak m$ if every trace of $T$ contains a point marked for the
corresponding line, i.e.\ if $T\cap\mathfrak m(\ell)\ne\emptyset$ for every line $\ell$ with
$|T\cap\ell|\ge2$.

\paragraph{The lattice $P_{p,\mathfrak m}$.}
Let $p$ be a prime, put $w:=\lceil\sqrt{2p}\rceil+1$ and let $\mathfrak m$ be a marking with
$|\mathfrak m(\ell)|=w$ for every line $\ell$. The lattice $P_{p,\mathfrak m}$
of~\cite[\S5]{wilhelm2026left} has depth $4$ and consists of $\top$; the $p^{2}$ level-$1$ elements
$a_{i,j}$, one for each point $(i,j)\in\FF_p^{2}$, which we identify with the points themselves; for
every line $L_{i,j}$ the $p-1$ level-$2$ elements $c_{i,j,z}$, each having $b_{i,j}$ as its only child
and two distinct marked points of $L_{i,j}$ as its parents, the pairs being distinct and covering all
of $\mathfrak m(L_{i,j})$; the $p^{2}+p$ level-$3$ elements $b_{i,j}$, one for each line, whose
level-$1$ ancestors are exactly the points of $L_{i,j}$; and $\bot$. Each $b_{i,j}$ satisfies
$\mu(b_{i,j})=0$ and therefore induces a blocking gadget whose set $L$ is the line $L_{i,j}$ and whose
set $M$ of unblocked level-$1$ elements is exactly $\mathfrak m(L_{i,j})$. The lattice has
$p^{3}+2p^{2}+2$ elements.

\section{Dot-algebra trees}\label{sec:daTrees}

We work with the lattice formulation of the conjecture, as in~\cite{wilhelm2026left}.

\begin{definition}\label{def:daTree}
Let $P$ be a lattice. A \emph{dot-algebra tree} (\emph{da-tree}) for $P$ is a finite rooted binary
tree $D$ in which
\begin{itemize}[leftmargin=2em]
  \item every leaf is labeled either with the symbol $\emptyset$ or with a vertex $v\in P$ such that
    $v\ne\top$ and $\mu(v)\ne0$;
  \item every internal node is labeled $+$ or $-$ and has a left and a right child.
\end{itemize}
The \emph{state} $X_u\subseteq P\setminus\{\top\}$ of each node $u$ is defined bottom-up as follows.
If $u$ is a leaf labeled $\emptyset$, then $X_u:=\emptyset$, and if $u$ is a leaf labeled $v$, then
$X_u:=S_v$. If $u$ is an internal node with left child $u_1$ and right child $u_2$, then
\begin{itemize}[leftmargin=2em]
  \item if $u$ is labeled $+$, it is required that $X_{u_1}\cap X_{u_2}=\emptyset$, and
    $X_u:=X_{u_1}\dotcup X_{u_2}$;
  \item if $u$ is labeled $-$, it is required that $X_{u_2}\subseteq X_{u_1}$, and
    $X_u:=X_{u_1}\dotsub X_{u_2}$.
\end{itemize}
The da-tree is \emph{winning} if the state of its root is $P\setminus\{\top\}$. It is
\emph{left-linear} if every right child of an internal node is a leaf.
\end{definition}

\begin{remark}
  The word ``winning'' is chosen in analogy to the toggle game. One could also call it ``complete'', or ``exhaustive''.
\end{remark}

So a leaf stands for one of the sets $S_v$, a $+$-node for a disjoint union and a $-$-node for the
complement of a subset inside a superset, and a winning da-tree is precisely a dot-algebra expression
for $S_\top=P\setminus\{\top\}$ whose leaves are the sets associated with vertices of nonzero Möbius
value. An example is given in Figure~\ref{fig:daTree}.

\begin{figure}[htbp]
  \centering
  \includegraphics[width=0.8\textwidth]{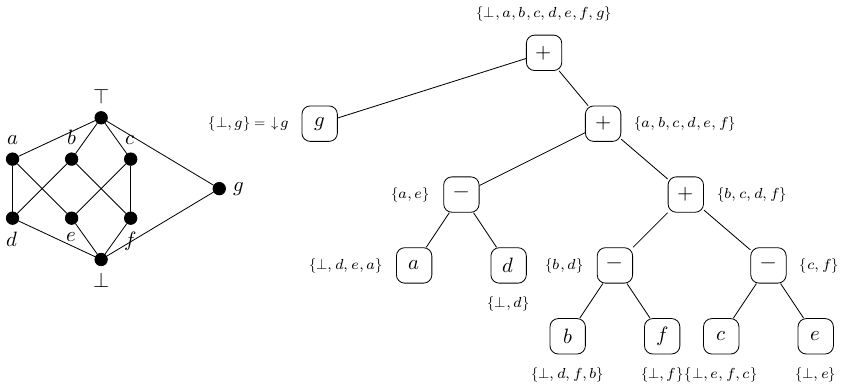}
  \caption{A lattice on the left and a corresponding winning da-tree on the right. The state of each node is written next to the node.}
  \label{fig:daTree}
\end{figure}

\begin{remark}\label{rem:emptyLeaves}
Leaves labeled $\emptyset$ are a convenience only, and admitting them makes the results below
formally stronger. They do not change what can be built: if $D$ is a da-tree whose root state is
nonempty and which has a leaf $\lambda$ labeled $\emptyset$, let $u$ be the parent of $\lambda$ (which
exists, as the root is not $\lambda$) and let $u'$ be the other child of $u$. If $u$ is labeled $+$,
or if $u$ is labeled $-$ and $\lambda$ is its right child, then $X_u=X_{u'}$ and we may replace the
subtree rooted at $u$ by the one rooted at $u'$. If $u$ is labeled $-$ and $\lambda$ is its left
child, then $X_{u'}\subseteq X_\lambda=\emptyset$, so $X_u=\emptyset$ and we may replace the subtree
rooted at $u$ by a single leaf labeled $\emptyset$. Either replacement leaves all remaining states
unchanged, preserves the requirements of Definition~\ref{def:daTree} and decreases the number of
nodes, so iterating it terminates in a da-tree with the same root state and no $\emptyset$-leaves at
all. We allow $\emptyset$-leaves because the translation of Section~\ref{sec:daToPlane} produces them:
vertices of $P_{p,\mathfrak m}$ below level $1$ carry no point of the plane. The same remark applies
verbatim to the plane da-trees of Definition~\ref{def:planeTree}.
\end{remark}

The NCI conjecture states that every finite lattice has a winning da-tree. Winning
\emph{left-linear} da-trees correspond to winning toggle sequences in the sense
of~\cite[\S3.1]{wilhelm2026left}: a toggle sequence always operates on the result obtained so far, so
all parentheses are moved to the left.

\begin{remark}\label{rem:latticeFormulation}
The conjecture of~\cite{amarilli2024non} is a statement about finite families of sets, and
Definition~\ref{def:daTree} is its lattice formulation, as in~\cite[\S3.1]{wilhelm2026left}. We recall
the translation, since it is the only link between the combinatorics below and the conjecture itself.
Let $P$ be a finite lattice in which every element is a meet of level-$1$ elements, take
$P\setminus\{\top\}$ as ground set and consider the family $\mathcal F:=\{S_a : a\in\cld(\top)\}$.
Distinct elements have distinct down-sets, and $S_u\cap S_v=\downarrow\!(u\wedge v)=S_{u\wedge v}$, so
the intersections of subfamilies of $\mathcal F$ are exactly the sets $S_v$ with $v\in P$; the empty
intersection is the ground set $S_\top=\bigcup\mathcal F$, and the lattice of intersections of
$\mathcal F$, ordered by inclusion, is $P$. The algebraically non-cancelling intersections are the
$S_v$ with $\mu(v)\ne0$. A dot-algebra representation of $S_\top$ in the sense
of~\cite[Definition~3.2]{amarilli2024non} is a term built from those sets by $\dotcup$ and
$\dotsub$, so its syntax tree \emph{is} a winning da-tree for $P$: the leaves are the $S_v$ with
$v\ne\top$ and $\mu(v)\ne0$, and the two side conditions are exactly the ones imposed in
Definition~\ref{def:daTree}. (If the representation is given as a circuit rather than as a term,
unfolding it into a tree changes neither the side conditions nor the value of any node.) Hence a
lattice with the above property and without a winning da-tree refutes the conjecture.

The lattice $P_{p,\mathfrak m}$ has that property: the $a_{i,j}$ are the level-$1$ elements,
$c_{i,j,z}$ is the meet of its two parents, $b_{i,j}$ is the meet of two unmarked points of
$L_{i,j}$ (for $p\ge11$), and $\bot$ is the meet of three points of $\FF_p^{2}$ that are not collinear; these
meets are read off the computation in~\cite[Appendix~A]{wilhelm2026left}.
\end{remark}

The main result of~\cite{wilhelm2026left} is that not every finite lattice admits a winning
left-linear da-tree. We show here that not every finite lattice admits a winning da-tree at all.

\section{Winning plane da-trees}\label{sec:planeTrees}

The plane counterpart of a da-tree is the following. It stands to the plane toggle game
$\PTG(p,\mathfrak m)$ of~\cite[\S4]{wilhelm2026left} exactly as a da-tree stands to a toggle sequence.

\begin{definition}\label{def:planeTree}
Let $p$ be a prime and $\mathfrak m$ a marking for $\FF_p^2$. A \emph{plane da-tree} for
$(p,\mathfrak m)$ is a finite rooted binary tree $\mathcal D$ in which
\begin{itemize}[leftmargin=2em]
  \item every leaf is labeled either with a point $q\in\FF_p^2$ or with the symbol $\emptyset$;
  \item every internal node is labeled $+$ or $-$ and has a left and a right child.
\end{itemize}
The \emph{state} $T_u\subseteq\FF_p^2$ of each node $u$ is defined bottom-up as follows. A leaf
labeled $q$ has state $\{q\}$ and a leaf labeled $\emptyset$ has state $\emptyset$. For an internal
node $u$ with left child $u_1$ and right child $u_2$,
\begin{itemize}[leftmargin=2em]
  \item if $u$ is labeled $+$, it is required that $T_{u_1}\cap T_{u_2}=\emptyset$, and
    $T_u:=T_{u_1}\dotcup T_{u_2}$;
  \item if $u$ is labeled $-$, it is required that $T_{u_2}\subseteq T_{u_1}$, and
    $T_u:=T_{u_1}\dotsub T_{u_2}$.
\end{itemize}
The plane da-tree is \emph{winning} if
\begin{enumerate}[label=(\roman*),leftmargin=2.6em]
  \item the state $T_u$ of every node $u$ of $\mathcal D$ is admissible for $\mathfrak m$, that is,
    every trace of $T_u$ contains a point marked for the corresponding line; and
  \item the state of the root is $\FF_p^2$.
\end{enumerate}
\end{definition}

An example of a winning plane da-tree is given in Figure~\ref{fig:plane_da_tree}.

\begin{figure}[htbp]
  \centering
  % ================= TOP ROW =================
  % Top-left: marked plane
  \begin{subfigure}[c]{0.45\textwidth}
    \includegraphics[width=\linewidth]{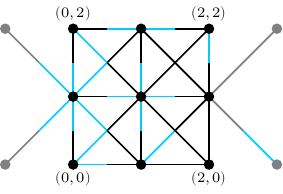}
    \caption{Marked plane}
    \label{fig:plane_da_tree_plane}
  \end{subfigure}\hfill
  % Top-right: main caption
  \begin{minipage}[c]{0.50\textwidth}
    \caption{Left: The marked plane $(3,\mathfrak m)$: for every line, the marked point is the one on the colored part of that line. Below: Plane da-tree for this marked plane. The state of every node is drawn as red points. Every state is admissible and the state of the root is $\mathbb{F}_3^2$, so the tree is winning.}
    \label{fig:plane_da_tree}
  \end{minipage}

  \vspace{1em} % space between the two rows

  % ================= BOTTOM ROW =================
  \begin{subfigure}[b]{1.0\textwidth}
    \centering
    % the counter must be set inside the subfigure environment
    \setcounter{subfigure}{1}
    \includegraphics[width=\linewidth]{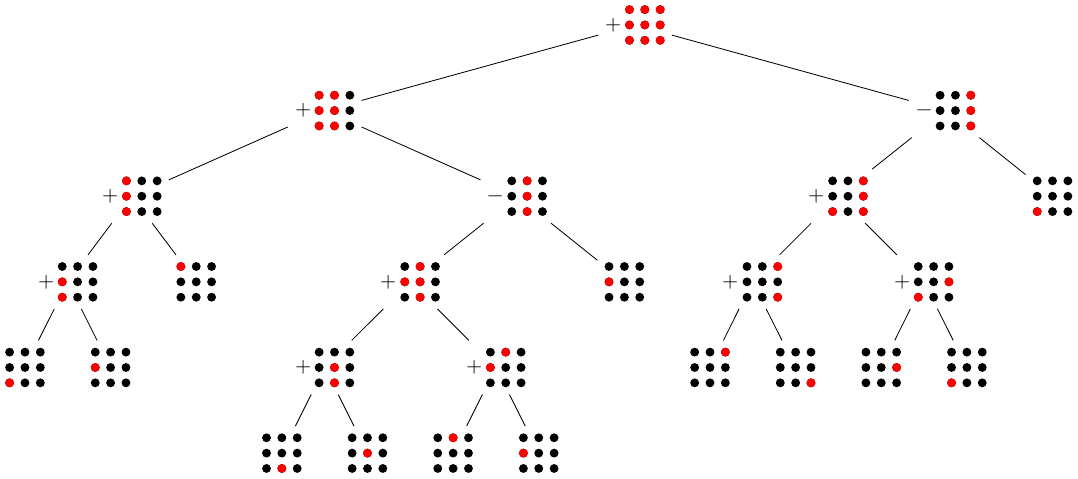}
    \caption{Winning plane da-tree.}
    \label{fig:plane_da_tree_tree}
  \end{subfigure}
\end{figure}

The following lemma is the substitute for the two continuity arguments of
\cite[Proposition~7.3]{wilhelm2026left}. It is the only place where the shape of the tree is used at all.

\begin{lemma}\label{lem:medium}
Let $p\ge2$ be a prime and let $\mathcal D$ be a winning plane da-tree for $(p,\mathfrak m)$. Then
$\mathcal D$ has a node $u$ whose state satisfies
\[
  2p \;\le\; |T_u| \;\le\; 4p-2 ,
\]
and $T_u$ is admissible for $\mathfrak m$.
\end{lemma}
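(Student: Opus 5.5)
We prove the general statement announced in the introduction: for every threshold $N$ with $1\le N\le p^{2}$ there is a node $u$ with $N\le |T_u|\le 2N-2$ (when $N\ge2$). Applying it with $N=2p\le p^{2}$ (valid for $p\ge2$) gives a node with $2p\le|T_u|\le 4p-2$. Admissibility of $T_u$ is then immediate from condition~(i) of Definition~\ref{def:planeTree}, since every node of a winning plane da-tree has admissible state.

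To find the node, we descend from the root. Call a node \emph{big} if $|T_u|\ge N$. The root is big, because $|T_{\mathrm{root}}|=p^{2}\ge N$. Leaves have state of size at most $1<N$, so they are not big. Hence there is a big node $u$ none of whose children is big: start at the root and move to a big child as long as one exists; the walk terminates since the tree is finite and leaves are not big. Let $u_1,u_2$ be the children of this $u$, so that $|T_{u_1}|,|T_{u_2}|\le N-1$.

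Now we bound $|T_u|$ by the label of $u$. If $u$ is labeled $+$, then $|T_u|=|T_{u_1}|+|T_{u_2}|\le 2N-2$. If $u$ is labeled $-$, then $|T_u|=|T_{u_1}|-|T_{u_2}|\le N-1<N$, which contradicts bigness, so this case cannot occur. In every case $N\le|T_u|\le 2N-2$.

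There is no real obstacle. The only point to get right is the choice of the extremal node: we take a big node with no big child, not a minimal big node in some other sense. A $-$-node with a big left child must not stop the descent, and the choice above handles this automatically, because the walk simply continues into any big child. The subtraction case is then excluded outright, and the factor $2$ in the bound comes from the disjoint union.
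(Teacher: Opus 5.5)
Your proof is correct and is essentially the paper's argument. Both descend from the root and stop where the state size first falls below $2p$, using that a $-$-node's left child is at least as large as the node and that a $+$-node's size is the sum of its children's. The difference is cosmetic: the paper fixes the path by always moving to a child carrying at least half the points and takes the last node $u_k$ on it with $|T_{u_k}|\ge 2p$, whereas you stop at a big node with no big child and rule out the $-$ label directly.
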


\begin{proof}
We first observe that every internal node has a child carrying at least half of its points. Indeed,
let $v$ be an internal node with left child $v_1$ and right child $v_2$. If $v$ is labeled $+$, then
$|T_v|=|T_{v_1}|+|T_{v_2}|$, so $|T_{v_i}|\ge|T_v|/2$ for at least one $i$. If $v$ is labeled $-$,
then $T_v\subseteq T_{v_1}$ and hence $|T_{v_1}|\ge|T_v|\ge|T_v|/2$.

Now build a path $u_0,u_1,\dots,u_m$ in $\mathcal D$ by letting $u_0$ be the root, letting $u_{k+1}$ be
a child of $u_k$ with $|T_{u_{k+1}}|\ge|T_{u_k}|/2$ as long as $u_k$ is internal, and stopping at a
leaf $u_m$. Since $\mathcal D$ is winning we have $|T_{u_0}|=p^{2}\ge 2p$, while
$|T_{u_m}|\le 1<2p$ because $u_m$ is a leaf. Hence there is a largest index $k$ with
$|T_{u_k}|\ge 2p$, and $k<m$. Put $u:=u_k$. By maximality $|T_{u_{k+1}}|\le 2p-1$, and by the choice
of the path $|T_{u}|\le 2|T_{u_{k+1}}|\le 4p-2$. Admissibility of $T_u$ holds because $\mathcal D$ is
winning.
\end{proof}

\section{Blocking gadgets constrain every node of a da-tree}\label{sec:GadgetTree}

The aim of this section is the following tree version of \cite[Lemma~4.2]{wilhelm2026left}.

\begin{lemma}[blocking gadgets, tree version]\label{lem:blockingGadgetTree}
Let $P$ be a lattice, let $x\in P$ induce a blocking gadget $Q$, let $L$ and $C$ be the sets of
level-$1$ and of level-$2$ elements of $Q$, and let $M\subseteq L$ be the set of level-$1$ elements not
blocked by $x$. Then for every node $u$ of every da-tree $D$ for $P$ the following holds: if
$X_u\cap M=\emptyset$, then $|X_u\cap(L\cup C)|\le1$. In particular, if $|X_u\cap L|\ge2$, then
$X_u\cap M\ne\emptyset$.
\end{lemma}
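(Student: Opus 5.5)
The plan is to replace membership by a signed count of leaves and then evaluate that count at the bottom element $x$ of the gadget. For a node $u$ and a leaf $\lambda$ in the subtree of $u$, let $\varepsilon_u(\lambda)\in\{\pm1\}$ be $(-1)^k$, where $k$ is the number of $-$-nodes on the path from $u$ to $\lambda$ at which the path enters the right child. For $v\in P\setminus\{\top\}$ let $n_u(v)$ be the sum of $\varepsilon_u(\lambda)$ over the leaves $\lambda$ labeled $v$ in the subtree of $u$.

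The first step is the indicator identity
\[
  \mathbf 1_{X_u}=\sum_{v}n_u(v)\,\mathbf 1_{S_v}
\]
as functions on $P\setminus\{\top\}$. I would prove it by induction on the subtree; this is presumably the content of Lemma~\ref{lem:netPick}.
\begin{itemize}[leftmargin=2em]
  \item At a leaf the identity is trivial; $\emptyset$-leaves contribute nothing.
  \item At a $+$-node the side condition $X_{u_1}\cap X_{u_2}=\emptyset$ gives $\mathbf 1_{X_u}=\mathbf 1_{X_{u_1}}+\mathbf 1_{X_{u_2}}$.
  \item At a $-$-node the side condition $X_{u_2}\subseteq X_{u_1}$ gives $\mathbf 1_{X_u}=\mathbf 1_{X_{u_1}}-\mathbf 1_{X_{u_2}}$, which matches the sign flip for leaves reached through the right child.
\end{itemize}
Consequently, for every $y\in P\setminus\{\top\}$ the \emph{net count} $N_u(y):=\sum_{v\ge y}n_u(v)$ equals $1$ if $y\in X_u$ and $0$ otherwise. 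In particular $N_u(y)\in\{0,1\}$.

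Next I restrict to the gadget. For $y\in Q=\uparrow\! x$, every $v\ge y$ lies in $Q$. A leaf label $v$ must satisfy $v\ne\top$ and $\mu(v)\ne0$, and $\mu(x)=0$, so only labels $v\in L\cup C$ contribute to $N_u(y)$. This gives three formulas:
\begin{itemize}[leftmargin=2em]
  \item for $a\in L$, since $\uparrow\! a=\{a,\top\}$, we get $N_u(a)=n_u(a)$;
  \item for $c\in C$, since $\uparrow\! c=\{c\}\cup\Par(c)\cup\{\top\}$, we get $N_u(c)=n_u(c)+\sum_{a\in\Par(c)}n_u(a)$;
  \item for $x$ itself, $N_u(x)=\sum_{a\in L}n_u(a)+\sum_{c\in C}n_u(c)$.
\end{itemize}

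Now assume $X_u\cap M=\emptyset$. Then $n_u(a)=N_u(a)=0$ for every $a\in M$. Since $\Par(c)\subseteq M$ for every $c\in C$ (Section~\ref{sec:preliminaries}), this gives $N_u(c)=n_u(c)$ for all $c\in C$. Therefore
\[
  N_u(x)=\sum_{a\in L\setminus M}N_u(a)+\sum_{c\in C}N_u(c)=|X_u\cap(L\setminus M)|+|X_u\cap C|=|X_u\cap(L\cup C)|.
\]
Since $N_u(x)\in\{0,1\}$, we conclude $|X_u\cap(L\cup C)|\le1$. The ``in particular'' statement is the contrapositive applied to $L\subseteq L\cup C$.

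The main obstacle is the first step: one must observe that the side conditions of Definition~\ref{def:daTree} make the set operations exactly linear on indicator functions. That linearity is what allows cancellation by later subtractions, and it makes the naive counting of \cite[Lemma~4.2]{wilhelm2026left} fail. Once this is in place, the rest is a short Möbius-style bookkeeping that uses only $\mu(x)=0$ and $\Par(c)\subseteq M$.
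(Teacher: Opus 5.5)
Your proposal is correct and follows essentially the same route as the paper. Your $n_u(v)$ and $N_u(y)$ are the paper's $\delta(v,u)$ and $\net(y,u)$, and your indicator identity is Lemma~\ref{lem:netPick}; the evaluation at level~$1$, at level~$2$ and at $x$, using $\mu(x)=0$ and $\Par(c)\subseteq M$, matches the paper's proof step by step.
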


In~\cite{wilhelm2026left} the corresponding statement is proved by counting, for each vertex $v$ of
the gadget, how often a toggle sequence has picked $v$ or an ancestor of $v$, and by comparing the
picks that switch $v$ on with those that switch it off. In a tree the picks are the leaves, but a leaf
does not simply switch its vertex on or off: whether it contributes positively or negatively depends
on how many times its contribution is subtracted again on the way up to $u$. This is what the
following sign records.

\begin{definition}\label{def:signedCount}
Let $D$ be a da-tree and $u$ a node of $D$. We write $D_u$ for the subtree of $D$ rooted at $u$ and
$\Lambda_u$ for the set of leaves of $D_u$. For $\lambda\in\Lambda_u$ let $r_u(\lambda)$ be the number
of nodes $v$ on the path from $u$ to $\lambda$ that are labeled $-$ and that the path leaves through
their \emph{right} child. For $b\in P$ put
\begin{align*}
  \evn(b,u) \;&:=\; \bigl|\{\lambda\in\Lambda_u:\ \lambda\text{ is labeled }b\text{ and }r_u(\lambda)\text{ is even}\}\bigr| ,\\
  \odt(b,u) \;&:=\; \bigl|\{\lambda\in\Lambda_u:\ \lambda\text{ is labeled }b\text{ and }r_u(\lambda)\text{ is odd}\}\bigr| ,\\
  \delta(b,u) \;&:=\; \evn(b,u)-\odt(b,u) ,
\end{align*}
and for $a\in P\setminus\{\top\}$ put
\[
  \net(a,u) \;:=\; \sum_{b\in\uparrow a}\delta(b,u) .
\]
\end{definition}

$\net(a,u)$ is exactly the signed number of leaves of $D_u$ whose label $b$ satisfies $a\in S_b$,
because $a\in S_b$ if and only if $b\in\uparrow a$. Note also that $\delta(\top,u)=0$ and
$\delta(b,u)=0$ for every $b$ with $\mu(b)=0$, since such vertices never label a leaf.

\begin{lemma}\label{lem:netPick}
Let $D$ be a da-tree for $P$, let $u$ be a node of $D$ and let $a\in P\setminus\{\top\}$. Then
\[
  \net(a,u)=
  \begin{cases}
    1, & a\in X_u,\\
    0, & a\notin X_u.
  \end{cases}
\]
\end{lemma}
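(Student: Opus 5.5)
We argue by structural induction on $D_u$, from the leaves upward, and prove the identity for all $a\in P\setminus\{\top\}$ simultaneously. The statement says that the indicator function $\mathbf 1_{X_u}$ on $P\setminus\{\top\}$ equals the signed sum of indicators of leaf sets,
\[
  \mathbf 1_{X_u}(a)\;=\;\sum_{\lambda\in\Lambda_u}(-1)^{r_u(\lambda)}\,\mathbf 1_{X_\lambda}(a),
\]
where $\mathbf 1_{X_\lambda}(a)=1$ exactly when $\lambda$ is labeled by some $b$ with $a\in S_b$, i.e.\ $b\in\uparrow a$. Grouping the leaves by label turns the right-hand side into $\sum_{b\in\uparrow a}\delta(b,u)=\net(a,u)$, as observed after Definition~\ref{def:signedCount}. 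A leaf labeled $\emptyset$ contributes $0$ for every $a$, since $\emptyset\notin\uparrow a$. So it suffices to prove the displayed identity.

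\emph{Base case.} If $u$ is a leaf, then $\Lambda_u=\{u\}$ and $r_u(u)=0$, because the path has no internal nodes. The right-hand side is therefore $\mathbf 1_{X_u}(a)$. Concretely, $\net(a,u)=1$ if $u$ is labeled $b$ with $a\in S_b=X_u$, and $0$ otherwise, including the $\emptyset$ case.

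\emph{Inductive step.} Let $u$ have left child $u_1$ and right child $u_2$. Then $\Lambda_u=\Lambda_{u_1}\dotcup\Lambda_{u_2}$. For $\lambda\in\Lambda_{u_1}$ we have $r_u(\lambda)=r_{u_1}(\lambda)$, since $u$ is left through its left child and so does not count. For $\lambda\in\Lambda_{u_2}$ we have $r_u(\lambda)=r_{u_2}(\lambda)$ if $u$ is labeled $+$, and $r_{u_2}(\lambda)+1$ if $u$ is labeled $-$. Hence
\[
  \net(a,u)=\net(a,u_1)\pm\net(a,u_2),
\]
with $+$ or $-$ according to the label of $u$. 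We then use the side conditions of Definition~\ref{def:daTree}:
\begin{itemize}[leftmargin=2em]
  \item For a $+$-node, disjointness gives $\mathbf 1_{X_u}=\mathbf 1_{X_{u_1}}+\mathbf 1_{X_{u_2}}$.
  \item For a $-$-node, $X_{u_2}\subseteq X_{u_1}$ gives $\mathbf 1_{X_u}=\mathbf 1_{X_{u_1}}-\mathbf 1_{X_{u_2}}$.
\end{itemize}
Combining either case with the induction hypothesis for $u_1$ and $u_2$ yields the claim at $u$. The side conditions are exactly what keep the values in $\{0,1\}$ rather than merely making the identity hold over $\mathbb Z$. Without them the indicator would not be additive in this form, and this is the only point where they enter.

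There is no real obstacle. The only point needing care is the bookkeeping of $r_u$ when passing from a child to its parent: the extra $-$-node is counted only on the right branch, which produces the sign flip. One must also check that $\emptyset$-leaves and $\top$ are harmless, which follows since $\delta(\top,u)=0$ and $\emptyset$-leaves contribute to no $\delta(b,u)$.
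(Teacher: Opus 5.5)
Your proposal is correct and follows essentially the same route as the paper: induction on $D_u$, the $r_u$ bookkeeping giving $\net(a,u)=\net(a,u_1)\pm\net(a,u_2)$, and the side conditions of Definition~\ref{def:daTree} converting this into the indicator of $X_u$. Your signed-sum-over-leaves reformulation is only a repackaging of the paper's step $\delta(b,u)=\delta(b,u_1)+\sigma\,\delta(b,u_2)$ summed over $b\in\uparrow a$.
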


\begin{proof}
Induction on the size of $D_u$. If $u$ is a leaf labeled $\emptyset$, then $u$ is the only leaf of
$D_u$ and it carries no label in $P$, so $\net(a,u)=0$, and indeed $a\notin X_u=\emptyset$. If $u$ is a
leaf labeled $v$, then $r_u(u)=0$ and
$\delta(b,u)=1$ for $b=v$ and $\delta(b,u)=0$ otherwise, so $\net(a,u)=1$ if $v\in\uparrow a$ and
$\net(a,u)=0$ otherwise; since $v\in\uparrow a$ if and only if $a\in\downarrow v=S_v=X_u$, this is the
assertion.

Let now $u$ be internal with left child $u_1$ and right child $u_2$, and put
\[
  \sigma:=\begin{cases} +1, & u\text{ is labeled }+,\\ -1, & u\text{ is labeled }-.\end{cases}
\]
Then $\Lambda_u=\Lambda_{u_1}\dotcup\Lambda_{u_2}$, and for $\lambda\in\Lambda_{u_1}$ we have
$r_u(\lambda)=r_{u_1}(\lambda)$, whereas for $\lambda\in\Lambda_{u_2}$ we have
$r_u(\lambda)=r_{u_2}(\lambda)+1$ if $u$ is labeled $-$ and $r_u(\lambda)=r_{u_2}(\lambda)$ if $u$ is
labeled $+$. Hence $\delta(b,u)=\delta(b,u_1)+\sigma\,\delta(b,u_2)$ for every $b$, and therefore
\[
  \net(a,u)=\net(a,u_1)+\sigma\,\net(a,u_2) .
\]
By the induction hypothesis $\net(a,u_i)$ is $1$ if $a\in X_{u_i}$ and $0$ otherwise. If $u$ is
labeled $+$, then $X_{u_1}\cap X_{u_2}=\emptyset$, so $a$ lies in at most one of the two states and
$\net(a,u_1)+\net(a,u_2)$ is $1$ if $a\in X_{u_1}\dotcup X_{u_2}=X_u$ and $0$ otherwise. If $u$ is
labeled $-$, then $X_{u_2}\subseteq X_{u_1}$, so $\net(a,u_1)-\net(a,u_2)$ is $1$ exactly if
$a\in X_{u_1}$ and $a\notin X_{u_2}$, that is, exactly if $a\in X_{u_1}\dotsub X_{u_2}=X_u$, and $0$
otherwise.
\end{proof}

\begin{proof}[Proof of Lemma~\ref{lem:blockingGadgetTree}]
Let $u$ be a node of $D$ and assume $X_u\cap M=\emptyset$. We abbreviate $\delta(b):=\delta(b,u)$.
Recall from Section~\ref{sec:preliminaries} that $Q=\{\top\}\cup L\cup C\cup\{x\}$ and that
$\Par(c)\subseteq M$ for every $c\in C$.

\emph{Level-$1$ elements.} Let $a\in L$. Since $\lev(a)=1$ we have $\uparrow a=\{a,\top\}$ and
therefore, using $\delta(\top)=0$ and Lemma~\ref{lem:netPick},
\begin{align}\label{eq:tree-level1}
  \delta(a)\;=\;\net(a,u)\;=\;[\,a\in X_u\,] .
\end{align}
In particular $\delta(a)=0$ for every $a\in M$, by the assumption $X_u\cap M=\emptyset$.

\emph{Level-$2$ elements.} Let $c\in C$. Since $\lev(c)=2$ we have
$\uparrow c=\{c\}\cup\Par(c)\cup\{\top\}$ with $\Par(c)\subseteq M$, so by \eqref{eq:tree-level1} all
parents of $c$ contribute $0$ to $\net(c,u)$, and
\begin{align}\label{eq:tree-level2}
  \net(c,u)\;=\;\delta(c)+\sum_{a\in\Par(c)}\delta(a)+\delta(\top)\;=\;\delta(c) ,
\end{align}
so that $\delta(c)=[\,c\in X_u\,]$ by Lemma~\ref{lem:netPick}.

\emph{The bottom of the gadget.} Since $\mu(x)=0$, the vertex $x$ never labels a leaf, so
$\delta(x)=0$; and $\uparrow x=Q$. Hence, by \eqref{eq:tree-level1}, \eqref{eq:tree-level2} and
Lemma~\ref{lem:netPick} applied to $x$,
\[
  [\,x\in X_u\,]\;=\;\net(x,u)\;=\;\delta(x)+\delta(\top)+\sum_{a\in L}\delta(a)+\sum_{c\in C}\delta(c)
  \;=\;\bigl|X_u\cap L\bigr|+\bigl|X_u\cap C\bigr| .
\]
The left-hand side is $0$ or $1$ and every summand on the right is $0$ or $1$, so at most one element
of $L\cup C$ lies in $X_u$. This is the first assertion, and the second is its contraposition
restricted to $L$.
\end{proof}

Note that the lemma holds for every da-tree, winning or not, and for every node, including the leaves.

\section{From da-trees to plane da-trees}\label{sec:daToPlane}

\begin{lemma}\label{lem:daToPlane}
Let $p$ be a prime and $\mathfrak m$ a marking with $|\mathfrak m(\ell)|=w$ for every line $\ell$. If
$P_{p,\mathfrak m}$ admits a winning da-tree, then there is a winning plane da-tree for $(p,\mathfrak m)$.
\end{lemma}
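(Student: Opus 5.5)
We translate a winning da-tree $D$ for $P_{p,\mathfrak m}$ into a plane da-tree $\mathcal D$ of the same shape by relabelling leaves. Keep every internal node with its label $+$ or $-$. Turn every leaf labeled $\emptyset$ into a leaf labeled $\emptyset$. A leaf labeled $v$ with $\lev(v)=1$ is the point $a_{i,j}$, and we label the new leaf with that point $(i,j)$. A leaf labeled by a vertex of level at least $2$ ($c_{i,j,z}$ or $\bot$; the $b_{i,j}$ never occur since $\mu(b_{i,j})=0$) is relabeled $\emptyset$. The guiding claim is that the state of every node $u$ of $\mathcal D$ is
\[
  T_u \;=\; X_u\cap \cld(\top),
\]
that is, the set of level-$1$ elements in $X_u$, read as points of $\FF_p^2$.

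First we prove this claim by induction on the subtree. For a leaf labeled $v$, the level-$1$ part of $S_v=\downarrow v$ is $\{v\}$ if $\lev(v)=1$, since a level-$1$ element is below no other level-$1$ element. If $\lev(v)\ge2$, the set $\downarrow v$ contains no level-$1$ element, because everything in it lies strictly below $v$ or equals $v$. So the leaf states match. For internal nodes, intersecting with $\cld(\top)$ commutes with disjoint union and with set difference. The side conditions also transfer: $X_{u_1}\cap X_{u_2}=\emptyset$ implies $T_{u_1}\cap T_{u_2}=\emptyset$, and $X_{u_2}\subseteq X_{u_1}$ implies $T_{u_2}\subseteq T_{u_1}$. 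Hence $\mathcal D$ is a plane da-tree and the formula for $T_u$ holds. At the root, $X=P\setminus\{\top\}$ contains all $p^2$ level-$1$ elements, so $T_{\mathrm{root}}=\FF_p^2$, which gives condition (ii).

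It remains to check admissibility, condition (i), which is the only substantive step. Fix a node $u$ and a line $\ell=L_{i,j}$ with $|T_u\cap\ell|\ge2$. Apply Lemma~\ref{lem:blockingGadgetTree} to the gadget induced by $x=b_{i,j}$. By the preliminaries this gadget has $L=\ell$ and $M=\mathfrak m(\ell)$, and the lemma holds at every node of every da-tree. Since $|X_u\cap L|=|T_u\cap\ell|\ge2$, the lemma gives $X_u\cap M\ne\emptyset$, that is, $T_u\cap\mathfrak m(\ell)\ne\emptyset$. So every state is admissible, and $\mathcal D$ is winning.

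The main point to verify carefully is the structural input on the gadget. We need that $b_{i,j}$ indeed has level $3$ and $\mu(b_{i,j})=0$. We also need that its level-$1$ ancestors are exactly the points of $L_{i,j}$ and that the unblocked ones are exactly $\mathfrak m(L_{i,j})$. All of this is recalled in Section~\ref{sec:preliminaries} from~\cite{wilhelm2026left}, so it can be cited. With that input the argument is just the bookkeeping above, with Lemma~\ref{lem:blockingGadgetTree} carrying the weight.
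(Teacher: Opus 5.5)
Your proposal is correct and follows the paper's proof essentially step for step. You use the same leaf relabelling, the same inductive identification of $T_u$ with the level-$1$ part of $X_u$ (including the transfer of the two side conditions), and the same admissibility argument via Lemma~\ref{lem:blockingGadgetTree} applied to the gadget induced by $b_{i,j}$, whose sets are $L=L_{i,j}$ and $M=\mathfrak m(L_{i,j})$.
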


\begin{proof}
Let $D$ be a winning da-tree for $P_{p,\mathfrak m}$. Let $\mathcal D$ be the tree obtained from $D$ by
keeping the underlying tree and the labels $+$ and $-$ of the internal nodes, and by relabeling each
leaf as follows: a leaf labeled with a level-$1$ element $a_{i,j}$ becomes a leaf labeled with the
point $(i,j)$, and a leaf labeled $c_{i,j,z}$, $\bot$ or $\emptyset$ becomes a leaf labeled
$\emptyset$. Note that these are the only possible leaf labels, since the vertices $b_{i,j}$ have
$\mu(b_{i,j})=0$.

For a node $u$ of $D$ put
\[
  \widehat T_u \;:=\; \bigl\{\,(i,j)\in\FF_p^2 \;:\; a_{i,j}\in X_u \,\bigr\} .
\]
We claim that $\widehat T_u$ is the state $T_u$ of the corresponding node of $\mathcal D$, and that
$\mathcal D$ is a winning plane da-tree.

\emph{Leaves.} Distinct level-$1$ vertices are incomparable, so the only level-$1$ vertex in
$S_{a_{i,j}}=\downarrow\! a_{i,j}$ is $a_{i,j}$ itself, hence $\widehat T_u=\{(i,j)\}$ for a leaf
labeled $a_{i,j}$. Moreover $S_{c_{i,j,z}}=\{c_{i,j,z},b_{i,j},\bot\}$ and $S_\bot=\{\bot\}$ contain
no level-$1$ vertex, so $\widehat T_u=\emptyset$ for the remaining leaves. In both cases
$\widehat T_u=T_u$.

\emph{Internal nodes.} Let $u$ be internal with children $u_1,u_2$ and assume inductively
$\widehat T_{u_i}=T_{u_i}$. If $u$ is labeled $+$ then $X_{u_1}\cap X_{u_2}=\emptyset$, hence
$\widehat T_{u_1}\cap \widehat T_{u_2}=\emptyset$, the requirement of Definition~\ref{def:planeTree} is
met and $\widehat T_u=\widehat T_{u_1}\dotcup\widehat T_{u_2}=T_u$. If $u$ is labeled $-$ then
$X_{u_2}\subseteq X_{u_1}$, hence $\widehat T_{u_2}\subseteq\widehat T_{u_1}$, the requirement is met
and $\widehat T_u=\widehat T_{u_1}\dotsub\widehat T_{u_2}=T_u$.

\emph{Admissibility.} Let $u$ be any node of $\mathcal D$ and let $L_{i,j}$ be a line with
$|T_u\cap L_{i,j}|\ge2$. We must show that at least one point of $T_u\cap L_{i,j}$ is marked for
$L_{i,j}$.

The vertex $x:=b_{i,j}$ induces a blocking gadget in $P_{p,\mathfrak m}$. Its set $L$ of level-$1$
elements is exactly the line $L_{i,j}$, and its set $M$ of unblocked level-$1$ elements is exactly
$\mathfrak m(L_{i,j})$; see~\cite[\S5]{wilhelm2026left}. Under the identification of $a_{k,l}$ with the
point $(k,l)$ we have $X_u\cap L=T_u\cap L_{i,j}$, so $|X_u\cap L|\ge2$, and
Lemma~\ref{lem:blockingGadgetTree} yields an element of $M\cap X_u$, that is, a point of
$T_u\cap L_{i,j}\cap\mathfrak m(L_{i,j})$. Hence the trace of $L_{i,j}$ on $T_u$ contains a point
marked for $L_{i,j}$, as required.

\emph{Root.} As $D$ is winning, the state of its root is $P_{p,\mathfrak m}\setminus\{\top\}$, which
contains every $a_{i,j}$; hence the root of $\mathcal D$ has state $\FF_p^2$.
\end{proof}

\section{A marking with no admissible set of medium size}\label{sec:counting}

Throughout this section $T\subseteq\FF_p^2$ and we write
\[
  j_\ell \;:=\; |T\cap\ell|
\]
for a line $\ell$ of $\FF_p^2$. The whole section rests on the two identities
\begin{align}\label{eq:twoIdentities}
  \sum_{\ell} j_\ell \;=\; |T|\,(p+1),
  \qquad
  \sum_{\ell}\binom{j_\ell}{2} \;=\; \binom{|T|}{2},
\end{align}
where both sums range over all $p^2+p$ lines: the first because every point lies on exactly $p+1$
lines, the second because every pair of distinct points in $T$ lies on exactly one line. Together they
give
\begin{align}\label{eq:secondMoment}
  \sum_\ell j_\ell^2 \;=\; 2\sum_\ell\binom{j_\ell}{2}+\sum_\ell j_\ell
  \;=\;|T|\bigl(|T|-1\bigr)+|T|(p+1)\;=\;|T|\bigl(|T|+p\bigr).
\end{align}

\begin{lemma}\label{lem:shortTracesCS}
Let $p\ge5$ be a prime and let $T\subseteq\FF_p^2$ with $2p\le |T|\le 4p$. Then at least
$\frac{(p+1)^{2}}{12}$ lines $\ell$ satisfy $2\le j_\ell\le 47$. In particular $T$ has at least
$\frac{(p+1)^{2}}{12}$ traces of size at most $47$.
\end{lemma}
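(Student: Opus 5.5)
The plan is to get the estimate from the two identities \eqref{eq:twoIdentities} and the second moment \eqref{eq:secondMoment} alone, with no further geometric input. Write $n:=|T|$, so that $\sum_\ell j_\ell=n(p+1)$, $\sum_\ell j_\ell^2=n(n+p)$, and the number of lines is $\sum_\ell 1=p^2+p$. These three power sums are the only information used. The idea is to find a real polynomial $g$ of degree at most $2$ with
\[
  g(j)\;\le\;[\,2\le j\le 47\,]\qquad\text{for every integer }0\le j\le p .
\]
Summing this over all lines bounds the number of lines with $2\le j_\ell\le 47$ from below by $\sum_\ell g(j_\ell)$. That sum is an explicit quadratic expression in $n$ and $p$.

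\emph{Choice of $g$.} The conditions force a negative leading coefficient. We need $g(0)\le 0$, $g(1)\le 0$ (a line through one point is not a trace), $g(j)\le 0$ for $j\ge 48$ (long traces must not be counted), and $g\le 1$ in between. The natural first choice puts roots at $1$ and $48$:
\[
  g(j)\;=\;c\,(j-1)(48-j),\qquad c:=\tfrac{1}{24\cdot 23}=\tfrac1{552},
\]
where $c$ is chosen so that the integer maximum of $(j-1)(48-j)$, attained at $j=24,25$, becomes exactly $1$. Expanding, $(j-1)(48-j)=-j^2+49j-48$. Hence
\[
  \sum_\ell g(j_\ell)\;=\;c\Bigl(-n(n+p)+49\,n(p+1)-48(p^2+p)\Bigr).
\]
This is a concave function of $n$, so on $2p\le n\le 4p$ its minimum is attained at an endpoint.

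\emph{Evaluating at the endpoints.} At $n=2p$ the bracket equals $44p^2+50p$. At $n=4p$ it equals $128p^2+148p$, which is larger. So the bound is roughly $\tfrac{44}{552}p^2$. Comparing this with $\tfrac{(p+1)^2}{12}$, and using $p\ge 5$ to absorb the lower-order terms, is the routine final step.

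\emph{Main obstacle.} The concern is the constant. The simple quadratic gives about $0.0797\,p^2$, just below $1/12\approx0.0833$. If that falls short, I would improve the weight along the lines the introduction suggests: use Cauchy--Schwarz on the lines with $2\le j_\ell\le 47$ instead of a pointwise minorant. That is, bound their number $K$ by
\[
  K\;\ge\;\frac{\bigl(\sum_{2\le j_\ell\le47} h(j_\ell)\bigr)^2}{\sum_{2\le j_\ell\le 47}h(j_\ell)^2}
\]
for a suitable linear weight $h$. The numerator is controlled from below by subtracting the contribution of the long lines, using
\[
  \sum_{j_\ell\ge48} j_\ell\;\le\;\frac{1}{48}\sum_\ell j_\ell^2 \;=\;\frac{n(n+p)}{48},
\]
and the contribution of the lines with $j_\ell\le 1$, using $\sum_\ell j_\ell(j_\ell-1)=n(n-1)$. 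The denominator is bounded by $n(n+p)$.

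Tuning $h$ (for instance $h(j)=j-1$, or $h(j)=j$) together with the threshold $47$ should bring the constant up to $1/12$. Checking the endpoint $n=2p$ is again the critical case. If it still falls short, the fallback is to use the per-point identity $\sum_{\ell\ni x}(j_\ell-1)=n-1\ge 2p-1$ over the $p+1$ lines through each $x\in T$, together with $n>p$, which rules out concentration on one line.
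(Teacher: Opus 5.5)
As written, your argument does not prove the lemma. The minorant $g(j)=(j-1)(48-j)/552$ gives at least $\frac{44p^2+50p}{552}$ lines, whereas $\frac{(p+1)^2}{12}=\frac{46p^2+92p+46}{552}$. So the comparison you call the routine final step fails for \emph{every} $p$, not only for small ones.

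The Cauchy--Schwarz fallback is only announced (``should bring the constant up''), and one of its ingredients is misidentified. The one-point lines must be controlled by the total number $p^2+p$ of lines, which yields $\sum_{j_\ell\ge2}j_\ell\ge(n-p)(p+1)$. The identity $\sum_\ell j_\ell(j_\ell-1)=n(n-1)$ bounds $\sum_{j_\ell\ge2}j_\ell$ only from above and cannot do this job.

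Carried out with $h(j)=j$ and that bound, the fallback does work: at the critical value $n=2p$ it gives $K\ge(7p/8+1)^2/6$. It is then essentially the paper's proof with the steps in the other order. The paper applies Cauchy--Schwarz to all lines with $j_\ell\ge2$ to show that at least $(p+1)^2/6$ lines have $j_\ell\ge2$. Only afterwards does it discard the at most $n(p+1)/48\le p(p+1)/12$ lines with $j_\ell\ge48$, by Markov's inequality on $\sum_\ell j_\ell$.

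Your own method can also be rescued. The missing observation is that the second root of $g$ should not sit at the cutoff. For roots $1$ and an odd $r$, the normalization is $4/(r-1)^2$. At $n=2p$ one has $\sum_\ell(j_\ell-1)(r-j_\ell)=(r-4)p^2+(r+2)p$, so the leading constant $4(r-4)/(r-1)^2$ is maximized at $r=7$, where it equals $1/3$. At $r=48$, with normalization $1/552$, it is only $44/552$.

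Take $g(j)=\frac{(j-1)(7-j)}{9}$, which satisfies $g(j)\le[\,2\le j\le6\,]$ for all integers $j\ge0$. Then for $2p\le n\le4p$ you get
\[
  \bigl|\{\ell:2\le j_\ell\le6\}\bigr|\;\ge\;\frac{-n^2+7np+8n-7p^2-7p}{9}\;\ge\;\frac{3p^2+9p}{9}\;=\;\frac{p(p+3)}{3}\;\ge\;\frac{(p+1)^2}{12},
\]
by concavity in $n$ and the endpoint values $3p^2+9p$ at $n=2p$ and $5p^2+25p$ at $n=4p$.

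This is a complete proof and a genuinely different one from the paper's. A single linear combination of the three power sums replaces Cauchy--Schwarz followed by a Markov cut. It even gives the cutoff $6$ and the constant $1/3$ in place of $47$ and $1/12$.
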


\begin{proof}
Write $t:=|T|$ and let
\[
  \mathcal N:=\{\ell : j_\ell\ge2\},\qquad N:=|\mathcal N| .
\]
There are $p^2+p$ lines in total, so at most $p^2+p$ of them satisfy $j_\ell=1$ and therefore, by the
first identity in \eqref{eq:twoIdentities},
\begin{align}\label{eq:AboundCS}
  \sum_{\ell\in\mathcal N} j_\ell \;=\; t(p+1)-\bigl|\{\ell:j_\ell=1\}\bigr|
  \;\ge\; t(p+1)-(p^2+p)\;=\;(p+1)(t-p).
\end{align}
By Cauchy--Schwarz and \eqref{eq:secondMoment},
\[
  \Bigl(\sum_{\ell\in\mathcal N} j_\ell\Bigr)^{2}
  \;\le\; N\sum_{\ell\in\mathcal N} j_\ell^{2}
  \;\le\; N\sum_{\ell} j_\ell^{2}
  \;=\; N\,t\,(t+p),
\]
so with \eqref{eq:AboundCS} and $t\ge2p$, which gives both $t-p\ge t/2$ and $t+p\le\frac32 t$,
\begin{align}\label{eq:MatLeastSixth}
  N\;\ge\;\frac{(p+1)^2(t-p)^2}{t(t+p)}\;\ge\;\frac{(p+1)^{2}\,t^{2}/4}{t\cdot\frac32 t}
  \;=\;\frac{(p+1)^2}{6}.
\end{align}

Next we bound the number of lines carrying many points. By the first identity in
\eqref{eq:twoIdentities} and $t\le4p$,
\[
  \bigl|\{\ell : j_\ell\ge48\}\bigr| \;\le\; \frac{1}{48}\sum_\ell j_\ell \;=\;\frac{t(p+1)}{48}
  \;\le\;\frac{4p(p+1)}{48}\;=\;\frac{p(p+1)}{12}\;<\;\frac{(p+1)^{2}}{12}\;\le\;\frac N 2 ,
\]
the last step by \eqref{eq:MatLeastSixth}. Consequently at least $N-N/2=N/2\ge\frac{(p+1)^{2}}{12}$
lines satisfy $2\le j_\ell\le 47$. Each of them determines a trace of size at most $47$, and distinct
lines determine distinct traces.
\end{proof}

\begin{lemma}\label{lem:markingExists}
Let $p\ge 10^{5}$ be a prime and put $w=\lceil\sqrt{2p}\rceil+1$. Then there is a marking $\mathfrak m$
with $|\mathfrak m(\ell)|=w$ for every line $\ell$ of $\FF_p^2$ such that no set $T\subseteq\FF_p^2$
with $2p\le|T|\le 4p$ is admissible for $\mathfrak m$.
\end{lemma}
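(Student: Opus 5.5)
We use the probabilistic method with a first-moment bound. For every line $\ell$ independently, let $\mathfrak m(\ell)$ be a uniformly random $w$-element subset of $\ell$. This requires $w\le p$, which clearly holds for $p\ge10^5$. We will show that the expected number of admissible sets $T$ with $2p\le|T|\le4p$ is less than $1$. Then some marking has none, and every marking in the sample space satisfies $|\mathfrak m(\ell)|=w$.

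\emph{Step 1: a single set.} Fix $T$ with $t:=|T|\in[2p,4p]$. By Lemma~\ref{lem:shortTracesCS} there is a set $\mathcal S_T$ of at least $(p+1)^2/12$ lines $\ell$ with $2\le j_\ell\le47$. Admissibility of $T$ requires $T\cap\ell\cap\mathfrak m(\ell)\ne\emptyset$ for every $\ell\in\mathcal S_T$. For one such line, a union bound over the $j_\ell$ points of the trace gives
\[
  \Pr\bigl[T\cap\ell\cap\mathfrak m(\ell)\ne\emptyset\bigr]\;\le\;j_\ell\cdot\frac wp\;\le\;\frac{47w}{p},
\]
since each fixed point of $\ell$ is marked with probability $w/p$. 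The markings of distinct lines are independent, so
\[
  \Pr[T\text{ admissible}]\;\le\;\Bigl(\frac{47w}{p}\Bigr)^{(p+1)^2/12}.
\]

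\emph{Step 2: union bound.} The number of candidate sets is
\[
  \sum_{t=2p}^{4p}\binom{p^2}{t}\;\le\;(2p+1)\,p^{8p}\;\le\;\exp\bigl(8p\ln p+\ln(2p+1)\bigr).
\]
Hence the expected number of admissible medium-size sets is at most
\[
  \exp\Bigl(8p\ln p+\ln(2p+1)-\tfrac{(p+1)^2}{12}\ln\tfrac{p}{47w}\Bigr).
\]
It suffices to show that the exponent is negative for all $p\ge10^5$.

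\emph{Step 3: the numerics, which I expect to be the main obstacle.} This is the only delicate step. The ratio $p/(47w)$ behaves like $\sqrt p/(47\sqrt2)$, so it exceeds $1$ only for moderately large $p$. At $p=10^5$ we have $w=449$, so $p/(47w)\approx4.7$ and $\ln(p/(47w))\approx1.5$. Then
\[
  \tfrac{p^2}{12}\cdot1.5\approx1.3\cdot10^{9},
\]
which dwarfs $8p\ln p\approx9.3\cdot10^6$.

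To cover all $p\ge10^5$, I will use $w\le\sqrt{2p}+2$ to get $p/(47w)\ge\sqrt p/(47\sqrt2+94/\sqrt p)$. This lower bound is increasing in $p$. The required inequality is
\[
  \tfrac{(p+1)^2}{12}\ln\tfrac{p}{47w}\;>\;8p\ln p+\ln(2p+1).
\]
Its left side grows like $p^2$, while its right side grows only like $p\ln p$. So the inequality follows from the check at $p=10^5$ together with a routine monotonicity argument, for instance by dividing both sides by $p\ln p$.

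The rounding in $w=\lceil\sqrt{2p}\rceil+1$ has to be handled carefully, but it only costs a factor close to $1$.
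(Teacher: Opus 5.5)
Your proposal is correct and follows the paper's proof: a uniformly random $w$-subset for each line, Lemma~\ref{lem:shortTracesCS} to get at least $(p+1)^2/12$ short traces, a union bound within each trace, independence across lines, and a first-moment count over all $T$. The only difference is the final numerics. The paper bounds the base by $48w/p\le 96/\sqrt p\le p^{-0.1}$, which is where $p\ge 10^5$ enters, and obtains $p^{-p^2/120}$ against at most $p^{4p+2}$ candidates. You instead compare logarithms directly via a monotone lower bound for $\ln\frac{p}{47w}$. That is equally valid, provided you state explicitly that $47w/p<1$, which is what allows lowering the exponent from $|\mathcal S_T|$ to $(p+1)^2/12$.
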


\begin{proof}
Choose $\mathfrak m$ at random: for every line $\ell$ independently, let $\mathfrak m(\ell)$ be a
uniformly random $w$-element subset of $\ell$. Thus every point of $\ell$ is marked for $\ell$ with
probability $w/p$, and the markings of distinct lines are independent.

Fix $T$ with $2p\le|T|\le4p$ and let $\mathcal S$ be a set of exactly $\lceil (p+1)^{2}/12\rceil$ lines
with $2\le j_\ell\le 47$, which exists by Lemma~\ref{lem:shortTracesCS}. If $T$ is admissible, then for
every $\ell\in\mathcal S$ the trace $T\cap\ell$ contains a point marked for $\ell$, an event of
probability at most
\[
  \Pr\bigl[\,T\cap\ell\cap\mathfrak m(\ell)\ne\emptyset\,\bigr]\;\le\;\frac{j_\ell\,w}{p}\;\le\;\frac{47w}{p}
  \;<\;\frac{48w}{p}
\]
by the union bound over the at most $47$ points of the trace. Distinct traces come from distinct lines,
so these events are independent and
\begin{align}\label{eq:probAdmissible}
  \Pr[\,T\text{ is admissible}\,]\;\le\;\Bigl(\frac{48w}{p}\Bigr)^{\lceil (p+1)^{2}/12 \rceil} .
\end{align}
We simplify the base and the exponent of \eqref{eq:probAdmissible}. As $p\ge12$ we have
$w=\lceil\sqrt{2p}\rceil+1\le\sqrt{2p}+2\le2\sqrt p$ and hence
\begin{align}\label{eq:baseBound}
  \frac{48w}{p}\;\le\;\frac{96}{p^{0.5}}\;\le\;\frac{1}{p^{0.1}} ,
\end{align}
where the second inequality is $96\le p^{0.4}$, which holds because $p^{0.4}\ge(10^{5})^{0.4}=10^{2}=100$. In particular the base is at most $1$, so the exponent in
\eqref{eq:probAdmissible} may be decreased to $p^{2}/12\le\lceil (p+1)^{2}/12\rceil$, and
\begin{align}\label{eq:probAdmissibleTwo}
  \Pr[\,T\text{ is admissible}\,]\;\le\;p^{-p^{2}/120} .
\end{align}
Let $E_p$ be the expected number of admissible sets $T$ with $2p\le|T|\le4p$. There are at most
$2p+1\le p^{2}$ possible cardinalities, and for $2p\le t\le 4p\le p^{2}/2$ we have
$\binom{p^2}{t}\le \binom{p^2}{4p} \le \bigl(ep^2/(4p)\bigr)^{4p}=(ep/4)^{4p}\le p^{4p}$, so by
\eqref{eq:probAdmissibleTwo}
\[
  E_p\;\le\;\sum_{t=2p}^{4p}\binom{p^2}{t}\,p^{-p^{2}/120}
  \;\le\;p^{4p+2}\cdot p^{-p^{2}/120}\;\le\;p^{\,5p-p^{2}/120} .
\]
The exponent is negative as soon as $p>600$, so $E_p<1$. Since $E_p$ is the expectation of a
non-negative integer-valued random variable, there is a marking $\mathfrak m$ for which the number of
admissible sets $T$ with $2p\le|T|\le4p$ is $0$.
\end{proof}

\begin{remark}\label{rem:slack}
The first-moment estimate has enormous slack, and it is worth saying where it comes from. The
probability that a fixed $T$ is admissible is at most $q^{E}$, where $q\approx 96/\sqrt p$ is the
probability that one short trace is hit and $E=\lceil (p+1)^{2}/12\rceil$ is the number of short
traces available, while the number of candidate sets is only $\exp(O(p\ln p))$. Since
$\ln(1/q)\approx\tfrac12\ln p$, the cost of admissibility is $\exp(-\Theta(p^{2}\ln p))$ and beats the
number of candidates by a factor $\Theta(p)$ in the exponent. Consequently neither constant in
Lemma~\ref{lem:shortTracesCS} matters: $\Omega(p^{1+\varepsilon})$ traces would already suffice
instead of the $\Omega(p^{2})$ we have, and the cutoff $47$ could be replaced by any $k$ with
$kw/p<1$, that is, by anything up to about $\sqrt{p/2}$. This is what makes the crude union bound over
all of the $\binom{p^2}{t}$ sets sufficient, and it is also why no structural information about $T$
beyond Lemma~\ref{lem:shortTracesCS} is needed. By contrast, for sets of constant size $n$ both
quantities are of the form $p^{\Theta(1)}$ --- there are $\binom{p^{2}}{n}\le p^{2n}$ candidates and
the probability is $p^{-\Theta(n^{2})}$ --- so the comparison is between two constants in the
exponent of $p$. One then needs the value of the constant in the Erd\H{o}s--Beck bound, and one needs
the almost collinear sets to be treated separately, since for them that bound says nothing. This is
what forces the more delicate analysis of~\cite[\S6--\S7]{wilhelm2026left}.
\end{remark}

\section{The main theorem}\label{sec:main}

\begin{theorem}\label{thm:noWinningDaTree}
Let $p\ge 10^{5}$ be a prime and let $\mathfrak m$ be a marking as provided by
Lemma~\ref{lem:markingExists}. Then the lattice $P_{p,\mathfrak m}$ admits no winning da-tree.
\end{theorem}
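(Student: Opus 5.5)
The proof will be by contradiction, assembling the three ingredients already in place. Suppose $P_{p,\mathfrak m}$ admits a winning da-tree $D$, where $p\ge10^{5}$ and $\mathfrak m$ is the marking from Lemma~\ref{lem:markingExists}. That lemma gives $|\mathfrak m(\ell)|=w$ for every line $\ell$, so the hypothesis of Lemma~\ref{lem:daToPlane} holds. Applying Lemma~\ref{lem:daToPlane}, we relabel the leaves of $D$ and obtain a winning plane da-tree $\mathcal D$ for $(p,\mathfrak m)$. In $\mathcal D$ every node state is admissible and the root state is $\FF_p^2$.

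Next we apply Lemma~\ref{lem:medium} to $\mathcal D$, which is valid since $p\ge 2$. It yields a node $u$ whose state $T_u$ is admissible for $\mathfrak m$ and satisfies $2p\le|T_u|\le4p-2\le 4p$. Lemma~\ref{lem:markingExists}, however, states that for this $\mathfrak m$ no set $T\subseteq\FF_p^2$ with $2p\le|T|\le4p$ is admissible. This is a contradiction, so no winning da-tree exists.

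There is no real obstacle left at this stage. The substantive work sits in Lemma~\ref{lem:blockingGadgetTree}, which feeds the admissibility clause of Lemma~\ref{lem:daToPlane}, and in the first-moment estimate of Lemma~\ref{lem:markingExists}. The only points to check here are bookkeeping:
\begin{itemize}
  \item The marking supplied by Lemma~\ref{lem:markingExists} has exactly the form ($|\mathfrak m(\ell)|=w$ with the same $w=\lceil\sqrt{2p}\rceil+1$) required to define $P_{p,\mathfrak m}$ and to invoke Lemma~\ref{lem:daToPlane}.
  \item The interval $[2p,4p-2]$ from Lemma~\ref{lem:medium} lies inside the excluded range $[2p,4p]$.
\end{itemize}

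Corollary~\ref{cor:nciFails} then follows from Remark~\ref{rem:latticeFormulation}. Every element of $P_{p,\mathfrak m}$ is a meet of level-$1$ elements, so a dot-algebra representation of $S_\top$ for the family $\{S_a : a\in\cld(\top)\}$ would be a winning da-tree, which the theorem rules out.
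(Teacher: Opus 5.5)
Your proof is correct and is the paper's argument: Lemma~\ref{lem:daToPlane} turns a winning da-tree into a winning plane da-tree, Lemma~\ref{lem:medium} gives an admissible node state with $2p\le|T|\le 4p-2$, and this contradicts Lemma~\ref{lem:markingExists}. Your two bookkeeping checks, that $|\mathfrak m(\ell)|=w$ and that $[2p,4p-2]\subseteq[2p,4p]$, are exactly the ones needed.
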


\begin{proof}
Suppose $D$ were a winning da-tree for $P_{p,\mathfrak m}$. By Lemma~\ref{lem:daToPlane} there is a
winning plane da-tree $\mathcal D$ for $(p,\mathfrak m)$. Applying Lemma~\ref{lem:medium} to $\mathcal D$
produces a node whose state $T$ is admissible for $\mathfrak m$ and satisfies
\[
  2p\;\le\;|T|\;\le\;4p-2 .
\]
This contradicts the choice of $\mathfrak m$ in Lemma~\ref{lem:markingExists}.
\end{proof}

\begin{corollary}\label{cor:nciFails}
There is a finite lattice admitting no winning da-tree. Consequently the NCI conjecture is false.
\end{corollary}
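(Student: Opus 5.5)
The corollary follows by combining Theorem~\ref{thm:noWinningDaTree} with the lattice formulation of Remark~\ref{rem:latticeFormulation}. I would first fix a prime $p\ge 10^{5}$, say the least one; such primes exist because there are infinitely many. Lemma~\ref{lem:markingExists} then supplies a marking $\mathfrak m$ with $|\mathfrak m(\ell)|=w$ for every line and with no admissible set of size between $2p$ and $4p$. For this marking the lattice $P_{p,\mathfrak m}$ is well defined, finite (it has $p^{3}+2p^{2}+2$ elements), and admits no winning da-tree by Theorem~\ref{thm:noWinningDaTree}. This gives the first sentence of the corollary.

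For the second sentence I would go through Remark~\ref{rem:latticeFormulation}. Take the ground set $P_{p,\mathfrak m}\setminus\{\top\}$ and the family $\mathcal F=\{S_a: a\in\cld(\top)\}$. The remark checks that every element of $P_{p,\mathfrak m}$ is a meet of level-$1$ elements, namely:
\begin{itemize}[leftmargin=2em]
  \item $c_{i,j,z}$ is the meet of its two parents;
  \item $b_{i,j}$ is the meet of two unmarked points of $L_{i,j}$, which exist since $p\ge 11$ gives $p>w$;
  \item $\bot$ is the meet of three non-collinear points.
\end{itemize}
Hence the intersection lattice of $\mathcal F$ is $P_{p,\mathfrak m}$, and the non-cancelling intersections are exactly the $S_v$ with $\mu(v)\ne0$.

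Any dot-algebra term for $\bigcup\mathcal F$ over these sets, after unfolding a circuit into a tree if needed, would be a winning da-tree for $P_{p,\mathfrak m}$. No such tree exists, so $\mathcal F$ refutes the NCI conjecture.

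The only point needing care is the hypothesis of the remark that every element is a meet of level-$1$ elements, in particular for $b_{i,j}$. Here one must check that two unmarked points of $L_{i,j}$ have no common lower bound among the $c_{i,j,z}$: each $c_{i,j,z}$ has only marked parents, so the meet of two unmarked points is indeed $b_{i,j}$. Everything else is bookkeeping already carried out in the paper.
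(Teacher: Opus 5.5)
Your proposal is correct and follows the paper's proof. It fixes a prime $p\ge 10^{5}$ (the paper takes $p=100\,003$, which is exactly the least such prime), takes $\mathfrak m$ from Lemma~\ref{lem:markingExists}, applies Theorem~\ref{thm:noWinningDaTree}, and passes to the set-family formulation via Remark~\ref{rem:latticeFormulation}. Your extra check that two unmarked points of $L_{i,j}$ meet in $b_{i,j}$ is fine and only spells out what the remark defers to the earlier paper; to rule out the $c$'s of other lines, note that two distinct points lie on only one line.
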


\begin{proof}
Take $p=100\,003$ and let $\mathfrak m$ be as in Lemma~\ref{lem:markingExists}. By
Theorem~\ref{thm:noWinningDaTree} the lattice $P_{p,\mathfrak m}$f admits no winning da-tree, so by
Remark~\ref{rem:latticeFormulation} the family $\{S_{a_{i,j}} : (i,j)\in\FF_p^{2}\}$ is a finite
family of sets whose union admits no dot-algebra representation.
\end{proof}

\begin{remark}\label{rem:size}
The lattice $P_{p,\mathfrak m}$ has $p^{3}+2p^{2}+2$ elements. Taking the smallest prime allowed by
Theorem~\ref{thm:noWinningDaTree}, namely $p=100\,003$, the counterexample has
\[
  100\,003^{3}+2\cdot 100\,003^{2}+2 \;<\;  1.1\cdot 10^{15}
\]
elements. This is of course still far from a lattice one would want to write down, but it is a bound
of an entirely different nature from the $10^{10^{10^{2215}}}$
of~\cite[Theorem~8.5]{wilhelm2026left}.
\end{remark}

\begin{remark}\label{rem:leftlinear}
The above also yields a much shorter proof of the main result of~\cite{wilhelm2026left}, with a far
smaller counterexample. Indeed, a winning toggle sequence for $P_{p,\mathfrak m}$ gives a winning
plane toggle sequence by \cite[Lemma~5.1]{wilhelm2026left}, and its states change by one point per
move, so one of them has exactly $2p$ points and is admissible --- which Lemma~\ref{lem:markingExists}
forbids. In particular Sections~6 and~8 of~\cite{wilhelm2026left} --- the finite Erd\H{o}s--Beck
theorem, the passage from $\CC$ to $\FF_p$ by the Nullstellensatz, and the resulting bound --- are not
needed for that result either. This is precisely the ``direct incidence-geometric argument over
$\FF_p$'' asked for in the remark closing~\cite[\S8]{wilhelm2026left}; the point is that it is
available not in the range of constant-size point sets, but in the range $|T|\approx 3p$, where the
trivial identities \eqref{eq:twoIdentities} already force a set to determine $\Omega(p^{2})$ short
traces.
\end{remark}

\section{Open problems}\label{sec:open}

\begin{enumerate}[leftmargin=2em]
  \item \textbf{Explicit counterexamples and lower bounds.} The proof of
  Theorem~\ref{thm:noWinningDaTree} is still a first-moment argument and produces no explicit marking.
  What is the smallest lattice on which no winning da-tree exists? Exhaustive search on
  $P_{p,\mathfrak m}$ for small $p$ is now conceivable.
  \item \textbf{Quantifying $\mu=0$ vertices.} As in~\cite{wilhelm2026left}: how many vertices with
  $\mu=0$ must a weakened da-tree use, and which ones?
\end{enumerate}

\section*{Use of AI}

The concepts and proof strategies in this paper were conceived by the author. Claude (Anthropic) was used to formalize the proofs from the author's initial notes and sketch. The author has verified all mathematical content and takes full responsibility for the paper.

\section*{Acknowledgements}

The author thanks Maximilian Merz for helpful discussions.

\bibliographystyle{amsplain}
\bibliography{references}

\providecommand{\bysame}{\leavevmode\hbox to3em{\hrulefill}\thinspace}
\providecommand{\MR}{\relax\ifhmode\unskip\space\fi MR }
% \MRhref is called by the amsart/book/proc definition of \MR.
\providecommand{\MRhref}[2]{%
  \href{http://www.ams.org/mathscinet-getitem?mr=#1}{#2}
}
\providecommand{\href}[2]{#2}
\begin{thebibliography}{1}

\bibitem{amarilli2024non}
Antoine Amarilli, Mika{\"{e}}l Monet, and Dan Suciu, \emph{The
  {N}on-{C}ancelling {I}ntersections {C}onjecture}, arXiv preprint
  arXiv:2401.16210 (2024).

\bibitem{wilhelm2026left}
H.~Wilhelm, \emph{The non-cancelling-intersections conjecture fails for
  left-linear trees}, arXiv preprint arXiv:2608.19414 (2026).

\end{thebibliography}

\end{document}